\documentclass[11pt]{article}
\usepackage[T1]{fontenc}
\usepackage{lmodern}
\usepackage{amsmath,amssymb,amsthm}
\usepackage[margin=1in]{geometry}
\usepackage{microtype}
\usepackage{enumitem}
\usepackage{hyperref}
\hypersetup{colorlinks=true,linkcolor=blue,citecolor=blue,urlcolor=blue}
\newcommand{\Q}{\mathbb Q}
\newcommand{\Z}{\mathbb Z}
\newcommand{\Pone}{\mathbb P^1}

\newcommand{\height}{h}
\newtheorem{theorem}{Theorem}[section]
\newtheorem{corollary}[theorem]{Corollary}
\newtheorem{conjecture}[theorem]{Conjecture}
\theoremstyle{remark}

\theoremstyle{plain}
\title{A Counterexample to the Eventual Stability Conjecture}
\author{THOMAS J. TUCKER
\and ZHENG ZHU}
\date{\today}
\begin{document}
\maketitle
\begin{abstract}
We give a counterexample to the eventual stability conjecture over number fields and classify for power maps. 
\end{abstract}

\section{A counterexample to the number field case}

Let $K$ be a number field and let $\phi\in K(z)$ have degree at least two.
Write $\phi^n=f_n/g_n$ with coprime polynomials. For
$\alpha\in K$, eventual stability means that the number of irreducible factors
of $f_n-\alpha g_n$ in $K[z]$, counted with multiplicity, is bounded independently
of $n$. If $\alpha=\infty$, use $g_n$ instead.

A point $\alpha\in\Pone(K)$ is \emph{nonperiodic} if
$\phi^r(\alpha)\ne\alpha$ for every $r\geq1$.

\begin{conjecture}[Jones--Levy {\cite[Conjecture~1.2(1)]{JL}}]
\label{conj:JL}
Let $K$ be a number field and let $\phi\in K(z)$ have degree $d\geq2$.
If $\alpha\in\Pone(K)$ is nonperiodic under $\phi$, then
$(\phi,\alpha)$ is eventually stable over $K$.
\end{conjecture}

The following example disproves Conjecture~\ref{conj:JL}.

\begin{theorem}\label{thm:counterexample}
Let $K=\Q$, $\phi(z)=z^6$, and $\alpha=-1$. Then $\alpha$ is nonperiodic, but
$(\phi,\alpha)$ is not eventually stable. More precisely,
\begin{equation}\label{eq:counterexample}
 z^{6^n}+1=\prod_{j=0}^{n}\Phi_{2^{n+1}3^j}(z)
 \qquad(n\geq1)
\end{equation}
is its complete irreducible factorization over $\Q$.
\end{theorem}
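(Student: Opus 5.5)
Three things need checking: that $\alpha=-1$ is nonperiodic, that \eqref{eq:counterexample} holds, and that the number of irreducible factors is unbounded. The count is immediate from the formula, since the product has $n+1$ factors and $n+1\to\infty$. Cyclotomic polynomials are irreducible over $\Q$, so the right side is automatically the complete irreducible factorization. Hence the whole content is the polynomial identity.

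\emph{Nonperiodicity.} Since $\phi(-1)=(-1)^6=1$ and $\phi(1)=1$, we get $\phi^r(-1)=1\neq -1$ for every $r\ge1$. So $-1$ is strictly preperiodic, and in particular nonperiodic in the sense defined above.

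\emph{The identity.} Set $N=6^n=2^n3^n$. The roots of $z^{N}+1$ are exactly the $2N$-th roots of unity that are not $N$-th roots of unity. This gives
\[
z^N+1=\frac{z^{2N}-1}{z^N-1}=\prod_{m\mid 2N,\ m\nmid N}\Phi_m(z),
\]
using $z^k-1=\prod_{m\mid k}\Phi_m(z)$. We have $2N=2^{n+1}3^n$. A divisor $m=2^a3^b$ of $2N$ fails to divide $N$ exactly when $a=n+1$. Here $b$ ranges over $0\le b\le n$. So the divisors in question are precisely $m=2^{n+1}3^j$ for $0\le j\le n$, which gives \eqref{eq:counterexample}.

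There is no real obstacle in this argument. The only point needing care is the divisor bookkeeping, namely that the $m\mid 2N$ with $m\nmid N$ are exactly those carrying the full power $2^{n+1}$. One should also observe that the factors are distinct, so no multiplicities arise. Finally, these factors are irreducible by the classical irreducibility of cyclotomic polynomials over $\Q$. Eventual stability therefore fails, since the factor count $n+1$ is unbounded.
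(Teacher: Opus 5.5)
Your proposal is correct and follows essentially the same route as the paper: the orbit $-1\mapsto 1\mapsto 1$, then $z^N+1=(z^{2N}-1)/(z^N-1)=\prod_{r\mid 2N,\ r\nmid N}\Phi_r(z)$ with $N=6^n$, and the divisor bookkeeping that singles out $2^{n+1}3^j$ for $0\le j\le n$. Your remark that the factors are distinct irreducible cyclotomic polynomials already settles multiplicity one; the paper also checks this separately via the derivative $6^nz^{6^n-1}$.
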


\begin{proof}
The orbit is $-1\mapsto1\mapsto1$, so no positive iterate returns to $-1$.
The depth-$n$ preimage polynomial is $z^{6^n}+1$.
For every positive integer $N$,
\[
 z^N+1=\frac{z^{2N}-1}{z^N-1}
       =\prod_{\substack{r\mid 2N\\r\nmid N}}\Phi_r(z).
\]
When $N=6^n=2^n3^n$, the divisors of $2N$ that do not divide $N$ are exactly
$2^{n+1}3^j$, $0\leq j\leq n$. This proves \eqref{eq:counterexample}.
The cyclotomic polynomials in that formula are irreducible over $\Q$ and
distinct. Thus the factor count is exactly $n+1$.

All factors have multiplicity one: the derivative is $6^nz^{6^n-1}$ and the
constant term is $1$. The count tends to infinity.
\end{proof}

Conjugating by $\mu(z)=z+1$ gives the same phenomenon with $\alpha=0$:
\[
 f(z)=(z-1)^6+1,\qquad 0\mapsto2\mapsto2,
 \qquad f^n(z)=(z-1)^{6^n}+1.
\]
Translation preserves irreducibility, so $f^n$ also has exactly $n+1$
irreducible factors over $\Q$.

\section{The classification for power maps}

\begin{theorem}\label{thm:classification}
Let $K$ be a number field, let $d\geq2$, and let $\alpha\in K$ be finite. Set
\[
 a_n(K,d,\alpha)
   =\#\{\text{irreducible factors of }z^{d^n}-\alpha
                  \text{ in }K[z]\}\qquad(n\geq1),
\]
with multiplicity. For each prime $\ell$ and positive integer $N$, define
\[
 v_\ell(N):=\max\{e\in\Z_{\geq0}:\ell^e\mid N\}.
\]

\begin{enumerate}[label=\textup{(\roman*)},leftmargin=*]
\item If $\alpha=0$, then $a_n(K,d,\alpha)=d^n$.
\item If $\alpha\ne0$ is not a root of unity, the numbers
$a_n(K,d,\alpha)$ are bounded independently of $n$.
\item If $\alpha$ has multiplicative order $m$, let $F=\Q(\alpha)$ and
\[
 T_n=\prod_{\substack{\ell\mid d\\\ell\nmid m}}
                       \bigl(nv_\ell(d)+1\bigr).
\]
Then
\begin{equation}\label{eq:exact}
 a_n(F,d,\alpha)=T_n,\qquad
 T_n\leq a_n(K,d,\alpha)\leq[K:F]T_n.
\end{equation}
In particular, $(z^d,\alpha)$ is eventually stable if and only if every prime
divisor of $d$ divides $m$.
\end{enumerate}
If $\alpha$ has multiplicative order $m$, then $\alpha$ is periodic under
$z^d$ if and only if $\gcd(m,d)=1$.
\end{theorem}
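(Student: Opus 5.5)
The plan is to treat the three cases separately. Throughout, $N=d^n$, and all roots of $z^N-\alpha$ are simple when $\alpha\ne0$, since the derivative $Nz^{N-1}$ vanishes only at $0$. Part (i) is immediate, because $z^{d^n}$ is the product of $d^n$ copies of the irreducible factor $z$.

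For (ii) I would use a height argument rather than Capelli's theorem. Let $g\in K[z]$ be a monic irreducible factor of $z^N-\alpha$ of degree $D$, with roots $\theta_1,\dots,\theta_D$, and let $c=\pm\prod\theta_i\in K$ be its constant term.
\begin{enumerate}[label=\textup{(\arabic*)},leftmargin=*]
\item Since $\theta_i^N=\alpha$ for each $i$, we get $c^N=\pm\alpha^D$.
\item Write $D/N=u/v$ in lowest terms. Then $c^{v}=\zeta\alpha^{u}$ for some root of unity $\zeta\in K$.
\item Because $\gcd(u,v)=1$, choose integers $x,y$ with $xu+yv=1$. This gives $\alpha\in\mu(K)\,K^{*v}$.
\item Suppose $\alpha=\omega\beta^v$ with $\omega\in\mu(K)$ and $\beta\in K$. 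Then $\beta$ is not a root of unity, so $h(\beta)=h(\alpha)/v$. Northcott's theorem gives a uniform lower bound $h(\beta)\ge c_K>0$ for non-torsion $\beta\in K$. Hence $v\le B:=h(\alpha)/c_K$.
\item Therefore $D=Nu/v\ge N/B$ for every irreducible factor. It follows that $a_n(K,d,\alpha)\le B$, independently of $n$.
\end{enumerate}

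For (iii), let $\alpha$ be a primitive $m$-th root of unity and $F=\Q(\alpha)=\Q(\zeta_m)$. Each root $\xi$ of $z^N=\alpha$ is a root of unity whose order $r$ divides $mN$.
\begin{enumerate}[label=\textup{(\arabic*)},leftmargin=*]
\item Write $N=N_1N_2$, where the primes of $N_1$ divide $m$ and $\gcd(N_2,m)=1$.
\item Compare valuations in the condition that $\xi^N$ has order exactly $m$. For $\ell\mid m$ this forces $v_\ell(r)=v_\ell(m)+v_\ell(N)$. For $\ell\nmid m$ it forces $v_\ell(r)\le v_\ell(N)$. So $r=mN_1s$ with $s\mid N_2$, and every such $s$ occurs.
\item The factors over $F$ correspond to orbits of $G=\mathrm{Gal}(\Q(\zeta_{mN})/F)=\{a\bmod mN: a\equiv1\bmod m\}$ on the roots. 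Fix one admissible order $r$.
\item The map $\xi\mapsto\xi^N$ from primitive $r$-th roots to primitive $m$-th roots is Galois-equivariant. Its fibers therefore have size $\varphi(r)/\varphi(m)$.
\item This fiber size equals the order of the image of $G$ in $(\Z/r)^*$, which is the kernel of $(\Z/r)^*\to(\Z/m)^*$. That group acts freely on primitive $r$-th roots and preserves each fiber, so it acts transitively on the fiber over $\alpha$.
\item Hence each admissible $r$ contributes exactly one irreducible factor, namely the cyclotomic-type factor of order $r$. So $a_n(F,d,\alpha)$ equals the number of divisors of $N_2$, which is $\prod_{\ell\mid d,\,\ell\nmid m}(nv_\ell(d)+1)=T_n$.
\item Over $K\supseteq F$, each $F$-irreducible factor splits into between $1$ and $[K:F]$ factors, since the degree of any $K$-factor is at least $1/[K:F]$ times the $F$-degree. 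This gives \eqref{eq:exact}.
\item $T_n$ is bounded exactly when the product is empty, and otherwise it grows linearly in $n$. This gives the stability criterion.
\end{enumerate}

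Finally, $\phi^r(\alpha)=\alpha^{d^r}$ equals $\alpha$ if and only if $d^r\equiv1\pmod m$. Some $r\ge1$ with this property exists if and only if $d$ is a unit mod $m$, that is, $\gcd(m,d)=1$.

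I expect the main obstacle to be step (5) of (iii), the orbit count showing exactly one $G$-orbit per order $r$. The difficulty is matching the fiber size $\varphi(r)/\varphi(m)$ with the size of the image of $G$, which uses $m\mid r$ and the surjectivity of $(\Z/r)^*\to(\Z/m)^*$. The uniform Northcott bound in step (4) of (ii) is the other point needing care.
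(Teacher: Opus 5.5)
Your proposal is correct and follows essentially the paper's proof: (ii) uses the same norm identity $c^{d^n}=\pm\alpha^{D}$ together with the Northcott--Kronecker height gap. In (iii) you use the same correspondence between $F$-irreducible factors and orders $r$ with $r/\gcd(r,d^n)=m$, count those orders by valuations, and pass to $K$ via $[K(\beta):K]\geq s/[K:F]$. The differences are minor: your B\'ezout detour through $\alpha\in\mu(K)K^{*v}$ is unnecessary, since $d^n h(c)=D\,h(\alpha)$ with $h(c)\geq c_K$ already gives the bound. On the other side, your free-action count supplies a justification for the paper's bare assertion that roots of equal order are conjugate over $F$, and you prove the periodicity criterion that the paper leaves implicit; note only that $T_n$ grows at least linearly, not exactly linearly, when several primes occur.
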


\begin{proof}
Part~(i) is immediate. For part~(ii), let $\height$ denote the absolute
logarithmic Weil height. Northcott's theorem and Kronecker's theorem give
a constant $\epsilon_K>0$ such that
\begin{equation}\label{eq:gap}
 \height(b)\geq\epsilon_K
 \qquad(b\in K^*,\ b\text{ not a root of unity}),
\end{equation}
see \cite[Theorems~3.7 and~3.8]{Silverman}.

Suppose that $\alpha\ne0$ is not a root of unity. For any $D\geq1$, let $H$
be a monic irreducible factor of $z^D-\alpha$, of degree $r$, and set
$b=(-1)^rH(0)\in K^*$. Multiplying the $D$-th powers of the roots of $H$ gives
\begin{equation}\label{eq:constant}
 b^D=\alpha^r.
\end{equation}
Thus $b$ is not a root of unity, and
\[
 r\height(\alpha)=D\height(b)\geq D\epsilon_K.
\]
Every irreducible factor has degree at least $D\epsilon_K/\height(\alpha)$.
Their degrees sum to $D$, so their number is at most
$\height(\alpha)/\epsilon_K$, independently of $D$.
Taking $D=d^n$ proves part~(ii).

For part~(iii), suppose that $\alpha$ has order $m$, and let $F=\Q(\alpha)$
and $D=d^n$. Every root of $z^D-\alpha$ is a root of unity. An integer $r\geq1$
occurs as the order of a root if and only if
\begin{equation}\label{eq:order}
 \frac{r}{\gcd(r,D)}=m.
\end{equation}
Roots of $z^D-\alpha$ of the same order are conjugate over $F$, so the
irreducible factors over $F$ correspond bijectively to the orders satisfying
\eqref{eq:order}.

For each prime $\ell$, condition \eqref{eq:order} is equivalent to
\[
 \begin{cases}
 v_\ell(r)=v_\ell(m)+v_\ell(D),&\ell\mid m,\\
 0\leq v_\ell(r)\leq v_\ell(D),&\ell\nmid m.
 \end{cases}
\]
The only choices occur at primes dividing $D$ but not $m$. Hence the number
of admissible orders is
\[
 \prod_{\substack{\ell\mid D\\\ell\nmid m}}(v_\ell(D)+1)=T_n.
\]
Since $z^D-\alpha$ is squarefree, this proves $a_n(F,d,\alpha)=T_n$.

Finally, an irreducible polynomial over $F$ of degree $s$ splits into at most
$e=[K:F]$ irreducible factors over $K$: every root $\beta$ satisfies
\[
 [K(\beta):K]=\frac{[K(\beta):F]}{e}\geq\frac{s}{e}.
\]
Extending the field cannot merge distinct factors, so
$T_n\leq a_n(K,d,\alpha)\leq eT_n$, as claimed.
\end{proof}

\begin{corollary}\label{cor:universal}
For a fixed integer $d\geq2$, the following are equivalent:
\begin{enumerate}[label=\textup{(\roman*)},leftmargin=*]
\item For every number field $K$ and every finite nonperiodic $\alpha\in K$,
the pair $(z^d,\alpha)$ is eventually stable over $K$.
\item The integer $d$ is a prime power.
\end{enumerate}
\end{corollary}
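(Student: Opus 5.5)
We derive both directions from Theorem~\ref{thm:classification}, which covers every finite $\alpha$. We also use its closing remark: a root of unity $\alpha$ of order $m$ is periodic under $z^d$ if and only if $\gcd(m,d)=1$.

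For (ii)$\Rightarrow$(i), suppose $d=p^k$ with $p$ prime, and let $\alpha\in K$ be finite and nonperiodic. We split into the three cases of Theorem~\ref{thm:classification}.
\begin{itemize}[leftmargin=*]
\item The case $\alpha=0$ cannot occur, since $0$ is fixed by $z^d$ and so is periodic.
\item If $\alpha\neq0$ is not a root of unity, part~(ii) bounds $a_n(K,d,\alpha)$ independently of $n$.
\item If $\alpha$ is a root of unity of order $m$, nonperiodicity forces $\gcd(m,d)\neq1$, so $p\mid m$. Then every prime divisor of $d$ (namely $p$) divides $m$. The product defining $T_n$ is therefore empty, so $T_n=1$, and \eqref{eq:exact} gives $a_n(K,d,\alpha)\leq[K:F]$, which is bounded.
\end{itemize}
In all cases $(z^d,\alpha)$ is eventually stable over $K$.

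For (i)$\Rightarrow$(ii), we argue by contraposition. Suppose $d$ has at least two distinct prime divisors $p\neq q$. Take $\alpha$ to be a primitive $p$-th root of unity and $K=F=\Q(\alpha)$. Since $\gcd(p,d)=p\neq1$, $\alpha$ is nonperiodic by the remark above. The prime $q$ divides $d$ but not $m=p$, so $T_n\geq nv_q(d)+1\geq n+1$, and $T_n\to\infty$. By \eqref{eq:exact}, $a_n(F,d,\alpha)=T_n$ is unbounded, so $(z^d,\alpha)$ is not eventually stable, contradicting (i). For $d=6$, $p=2$ this recovers Theorem~\ref{thm:counterexample}.

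Nothing here is difficult. The only step needing care is the periodicity criterion. The iterates of $\alpha$ are $\alpha^{d^r}$, so $\alpha$ is periodic exactly when $d^r\equiv1\pmod m$ for some $r\geq1$, which happens if and only if $d$ is invertible mod $m$, i.e.\ $\gcd(m,d)=1$. This is also what rules out $\alpha=0$ and $\alpha$ of order $m$ coprime to $d$ in the first direction.
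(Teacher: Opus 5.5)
Your proof is correct and follows the same route as the paper, which states the corollary without a separate proof as an immediate consequence of parts~(ii) and~(iii) of Theorem~\ref{thm:classification} and the periodicity criterion $\gcd(m,d)=1$; your use of a primitive $p$-th root of unity for the converse is the natural generalization of Theorem~\ref{thm:counterexample}. The only quibble is cosmetic: $\alpha=0$ is excluded because it is a fixed point, not by the root-of-unity criterion your last sentence credits, and your first bullet already handles it that way.
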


\end{document}